\DeclareFontFamily{OT2}{cmr}{\hyphenchar\font45 }
\DeclareFontShape{OT2}{cmr}{m}{n}{%
   <5><6><7><8><9>gen*wncyr%
   <10><10.95><12><14.4><17.28><20.74><24.88>wncyr10}{}   
\DeclareFontShape{OT2}{cmr}{b}{n}{%
   <5><6><7><8><9>gen*wncyb%
   <10><10.95><12><14.4><17.28><20.74><24.88>wncyb10}{}
\DeclareMathAlphabet{\mathcyr}{OT2}{cmr}{m}{n}
\DeclareMathAlphabet{\mathcyb}{OT2}{cmr}{b}{n}
\SetMathAlphabet{\mathcyr}{bold}{OT2}{cmr}{b}{n}
\theoremstyle{plain}
    \newtheorem{theorem}{Theorem}[section]
    \newtheorem{proposition}[theorem]{Proposition}
    \newtheorem{lemma}[theorem]{Lemma}
    \newtheorem{corollary}[theorem]{Corollary}
\theoremstyle{definition}
    \newtheorem{definition}[theorem]{Definition}
    \newtheorem{example}[theorem]{Example}
\newcommand{\bZ}{\mathbb{Z}}
\newcommand{\bQ}{\mathbb{Q}}
\newcommand{\cA}{\mathcal{A}}
\newcommand{\cB}{\mathcal{B}}
\newcommand{\cL}{\mathcal{L}}
\newcommand{\bk}{\mathbf{k}}
\newcommand{\bp}{\boldsymbol{p}}
\newcommand{\Li}{\mathrm{Li}}
\newcommand{\OY}{\mathrm{OY}}
\newcommand{\wt}{\mathrm{wt}}
\newcommand{\blambda}{\boldsymbol\lambda}
\newcommand{\bmu}{\boldsymbol\mu}
\newcommand{\bnu}{\boldsymbol\nu}
\address{Department of Mathematics, Keio University, 3-14-1 Hiyoshi, Kouhoku-ku,Yokohama 223-8522, Japan}
\email{ono@math.keio.ac.jp}
\title{New functional equations of finite multiple polylogarithms}
\author{Masataka Ono}
\thanks{This research was supported in part by KAKENHI 26247004, as well as the JSPS Core-to-Core program ``Foundation of a Global Research Cooperative
Center in Mathematics focused on Number Theory and Geometry" and the KiPAS program 2013--2018 of the Faculty of Science and Technology at Keio University. }
\begin{document}

\maketitle
\begin{abstract}

We give a finite analogue of the well-known formula $\Li_{\underbrace{1, \ldots, 1}_n}(t)=\frac{1}{n!}\Li_1(t)^n$ of multiple polylogarithms for any positive integer $n$ by using the shuffle relation of finite multiple polylogarithms of Ono--Yamamoto type. Unlike the usual case, the terms regarded as error terms appear in this formula. As a corollary, we obtain $``t \leftrightarrow 1-t"$ type new functional equations of finite multiple polylogarithms of Ono--Yamamoto type and Sakugawa-Seki type.

\end{abstract}

\tableofcontents
\setcounter{section}{0}

\section{Introduction}

In this article, we give a finite analogue of the well-known formula $\Li_{\underbrace{1, \ldots, 1}_n}(t)=\frac{1}{n!}\Li_1(t)^n$ of multiple polylogarithms for any positive integer $n$ by using the shuffle relation of finite multiple polylogarithm (FMP) of Ono--Yamamoto type (OY-type). As a corollary, we obtain $``t \leftrightarrow 1-t"$ type new functional equations of FMPs of OY-type. In addition, by using the known relation between FMPs of OY-type and Sakugawa--Seki type (SS-type), we also obtain new functional equations of FMPs of SS-type, which seem to be difficult obtained only by using Sakugawa--Seki's results \cite{SS}.

First, we review of the history of FMPs. Recently, the author and Yamamoto \cite{OY} introduced finite multiple polylogarithms (FMPs) $\pounds^\OY_{\cA, \bk}(t)$ as an element of the $\bQ$-algebra $\cA_{\bZ[t]}:=\left.\bigl(\prod_p (\bZ/p\bZ)[t]\bigr)\right/\bigl(\bigoplus_p (\bZ/p\bZ)[t]\bigr)$, where $p$ runs through all the rational primes. Thus, an element of $\cA_{\bZ[t]}$ is represented by a family $(f_p)_p$ of polynomials $f_p \in (\bZ/p\bZ)[t]$, and two families $(f_p)_p$ and $(g_p)_p$ represent the same element of $\cA_{\bZ[t]}$ if and only if $f_p=g_p$ for all but finitely many primes $p$. We denote such an element of $\cA_{\bZ[t]}$ simply by $f_p$ omitting $(\ )_p$ if there is no fear of confusion. For example, we denote an element $(t^p \bmod{p})_p$ of $\cA_{\bZ[t]}$ by $t^{\boldsymbol{p}}$. 

Note that the idea considering several objects depending on a fixed prime $p$ in some adelic rings is due to Kaneko--Zagier's theory of finite multiple zeta values \cite{KZ}. Also note that the $\bQ$-algebra $\cA_{\bZ[t]}$ is denoted by $\cB$ in \cite{OY}. The symbol $\cA_{\bZ[t]}$ is Sakugawa--Seki's notation in \cite{SS}.

\begin{definition}[{\cite[Definition 1.2]{OY}}]
For a positive integer $r$ and an index $\bk=(k_1, \ldots, k_r) \in (\bZ_{\geq1})^r$, we define a finite multiple polylogarithm of Ono--Yamamoto type (OY-type for short) by 
$$
\pounds^{\OY}_{\cA, \bk}(t):=\sideset{}{'}\sum_{0<l_1, \ldots, l_r<p}\frac{t^{l_1+\cdots+l_r}}{l^{k_1}_1(l_1+l_2)^{k_2}\cdots(l_1+\cdots+l_r)^{k_r}} \bmod{p}
$$
as an element of $\cA_{\bZ[t]}$. Here, $\sideset{}{'}\sum$ denotes the sum of fractions whose denominators are prime to $p$.
\end{definition}

We respectively call integers $r$ and $k_1+\cdots+k_r$ the depth and weight of $\bk=(k_1, \ldots, k_r)$ and we denote the weight of index of $\bk$ by $\wt(\bk)$.

One of the reasons why we introduces FMPs of OY-type was to establish a finite analogue of the shuffle relation for multiple polylogarithms.

On the other hand, Sakugawa and Seki introduced another type of FMPs in \cite{SS}. We call their FMPs SS-type in this article. One of their motivations of introducing their FMPs was to establish functional equations of their FMPs.

More recently, Seki put a question about functional equations of FMPs of OY-type with the index $\{1\}^n:=(\underbrace{1, \ldots, 1}_n)$ for a positive integer $n$. The results of Kontsevich \cite[(A)]{K} and Elbaz-Vincent and Gangl \cite[PROPOSITION 5.9 (1)]{EG} say that $\pounds^\OY_{\cA, 1}(t)=\pounds^\OY_{\cA, 1}(1-t)$ holds. Futhermore, by using functional equations of FMPs of SS-type and the fact that FMPs of OY-type can be written in terms of FMPs of SS-type \cite[Proposition 3.26]{SS}, Seki \cite[Theorem 14.6]{Se} proved the equality $\pounds^\OY_{\cA, \{1\}^2}(t)=\pounds^\OY_{\cA, \{1\}^2}(1-t)$.

In this article, we give an answer to Seki's question, that is, we give functional equations between $\pounds^\OY_{\cA, \{1\}^n}(t)$ and $\pounds^\OY_{\cA, \{1\}^n}(1-t)$ for a positive integer $n$, which contain Seki's result as the case of $n=2$. In order to state our main theorem, we recall the definition of a variant $\zeta^{(i)}_\cA(\bk)$ of finite multiple zeta values (FMZVs) $\zeta_\cA(\bk)$.

\begin{definition}[{\cite[Definition 2.1]{OY}}] \label{var_FMZV}
For an index $\bk=(k_1, \ldots, k_r)$ and $1 \leq i \leq r$, we define a variant of FMZVs as an element of $\cA:=\left.\bigl(\prod_p \bZ/p\bZ\bigr)\right/\bigl(\bigoplus_p \bZ/p\bZ\bigr)$ by
$$
\zeta^{(i)}_\cA(\bk)
:=\sideset{}{'}\sum_{\substack{0<l_1, \ldots, l_r <p \\ (i-1)p< l_1+\cdots+l_r < ip}}
\frac{1}{l^{k_1}_1(l_1+l_2)^{k_2}\cdots (l_1+\cdots+l_r)^{k_r}}\bmod{p}.
$$
\end{definition}

Note that $\zeta^{(1)}_\cA(\bk)$ coincides with the usual FMZV $\zeta_\cA(\bk)$ defined by
$$
\zeta_\cA(\bk)=\sum_{0<n_1<\cdots<n_r<p}\frac{1}{n^{k_1}_1\cdots n^{k_r}_r} \bmod{p}
$$
and we see that $\zeta^{(r)}_\cA(\bk)=(-1)^{\wt(\bk)}\zeta_\cA(\bk)$.

The main theorem of this article is the following.

\begin{theorem}\label{main theorem}
For a positive integer $n$, we define two elements
\begin{equation}\label{f_n}
f_n(t):=\sum_{k=0}^{n-2}\left(\sum_{i=1}^{n-k-1}\zeta^{(i)}_\cA(\{1\}^{n-k-2}, 2)t^{i{\bp}}\right)\pounds^\OY_{\cA, \{1\}^k}(t)
\end{equation}
and
\begin{equation}\label{g_n}
g_n(t):=\sum_{k=0}^{n-2}\left(\sum_{i=1}^{n-k-2}\zeta^{(i)}_\cA(\{1\}^{n-k-2})t^{i{\bp}}\right)\pounds^\OY_{\cA, (2, \{1\}^k)}(t)
\end{equation}
of $\cA_{\bZ[t]}$. Here, we understand that these elements are equal to 0 if the sums are empty.
Then, we have
\begin{equation}\label{fin_analogue_of_shrel}
\pounds^\OY_{\cA, \{1\}^n}(t)=\frac{1}{n!}\pounds^\OY_{\cA, 1}(t)^n+\frac{1}{n!}\sum_{k=1}^n(k-1)!(f_k(t)+g_k(t))\pounds^\OY_{\cA, 1}(t)^{n-k}.
\end{equation}

\end{theorem}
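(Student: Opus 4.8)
The plan is to reduce the theorem to the single shuffle step
\[
n\,\pounds^\OY_{\cA, \{1\}^n}(t) = \pounds^\OY_{\cA, \{1\}^{n-1}}(t)\cdot \pounds^\OY_{\cA, 1}(t) + f_n(t) + g_n(t),
\]
valid for all $n\ge 1$ with the convention $f_1=g_1=0$, and then to unfold this recursion. Indeed, writing $A_n:=\pounds^\OY_{\cA,\{1\}^n}(t)$, $L:=\pounds^\OY_{\cA,1}(t)$ and $E_k:=f_k(t)+g_k(t)$, the relation reads $nA_n=A_{n-1}L+E_n$ with $A_0=1$; a straightforward induction (the coefficient accumulated in front of $E_kL^{n-k}$ being $\tfrac{1}{n(n-1)\cdots k}=\tfrac{(k-1)!}{n!}$) gives $A_n=\tfrac{1}{n!}L^n+\tfrac{1}{n!}\sum_{k=1}^n (k-1)!\,E_k\,L^{n-k}$, which is precisely \eqref{fin_analogue_of_shrel}. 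Thus everything rests on the shuffle step, i.e.\ on multiplication by $\pounds^\OY_{\cA,1}(t)$.

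To prove the shuffle step I would expand the product. Setting $L_j:=l_1+\cdots+l_j$, one has
\[
\pounds^\OY_{\cA,\{1\}^{n-1}}(t)\cdot\pounds^\OY_{\cA,1}(t)=\sideset{}{'}\sum_{\substack{0<l_1,\ldots,l_{n-1}<p\\ 0<m<p}}\frac{t^{\,L_{n-1}+m}}{L_1\cdots L_{n-1}\,m}.
\]
The engine is the partial-fraction (shuffle) identity $\frac{1}{L_1\cdots L_{n-1}m}=\sum_{s=1}^{n}\frac{1}{M^{(s)}_1\cdots M^{(s)}_n}$, where $M^{(s)}$ is the sequence of partial sums obtained by inserting $m$ as the $s$-th increment among $l_1,\ldots,l_{n-1}$. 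Since the top partial sum equals $L_{n-1}+m$ for every $s$, each right-hand term is a summand of $\pounds^\OY_{\cA,\{1\}^n}(t)$, so on the locus where no newly created partial sum $L_j+m$ ($0\le j\le n-1$) is divisible by $p$, summation reproduces exactly $n\,\pounds^\OY_{\cA,\{1\}^n}(t)$. The correction $f_n+g_n$ is the total defect coming from the boundary locus $\{L_j+m\equiv 0\}$ together with the summands of $\pounds^\OY_{\cA,\{1\}^n}(t)$ that its own primality condition removes.

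The heart of the matter is the analysis of this defect. On $\{L_j+m=ip\}$ one has $t^{\,L_{n-1}+m}=t^{\,i\bp}\,t^{\,L_{n-1}-L_j}$ and $\tfrac1m\equiv-\tfrac1{L_j}\pmod p$, so the factor $\tfrac{1}{L_jm}$ collapses to $-\tfrac{1}{L_j^{2}}$ — this is the origin of the entry $2$ in the indices. The inner variables $l_1,\ldots,l_j$ then run over $(i-1)p<L_j<ip$ and assemble into the variant $\zeta^{(i)}_\cA(\{1\}^{j-1},2)$, the monomial $t^{\,i\bp}$ records the block that is crossed, and the outer variables assemble into a lower finite multiple polylogarithm. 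According to whether the squared factor is attached to the closing block or to the beginning of the continuation, the defect splits into the two shapes $\zeta^{(i)}_\cA(\{1\}^{\ast},2)\,\pounds^\OY_{\cA,\{1\}^k}(t)$ and $\zeta^{(i)}_\cA(\{1\}^{\ast})\,\pounds^\OY_{\cA,(2,\{1\}^k)}(t)$, i.e.\ into $f_n(t)$ and $g_n(t)$. The smallest case $n=2$, where the only boundary is $l_1+m=p$ and the defect is $t^{\bp}\zeta^{(1)}_\cA(2)=f_2(t)$, already exhibits the mechanism.

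The step I expect to be the main obstacle is the bookkeeping of these boundary contributions. After a crossing $L_j+m=ip$ the surviving partial sums $L_{j+1},\ldots,L_{n-1}$ are shifted by $L_j$, so rewriting the tail as a genuine $\pounds^\OY_{\cA,\{1\}^k}(t)$ or $\pounds^\OY_{\cA,(2,\{1\}^k)}(t)$ forces a further reindexing that feeds back on the $\zeta^{(i)}$-factor; moreover a single tuple can cross at several positions at once and can be produced by several insertion indices $s$, so the defect must be organized by an inclusion–exclusion over the set of vanishing partial sums. Only after this reorganization — which simultaneously balances the terms gained on the boundary locus against those lost to the primality condition of $\pounds^\OY_{\cA,\{1\}^n}(t)$ — do the contributions collapse to exactly $f_n(t)+g_n(t)$.
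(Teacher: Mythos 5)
Your reduction of the theorem to the single relation $n\pounds^\OY_{\cA,\{1\}^n}(t)=\pounds^\OY_{\cA,\{1\}^{n-1}}(t)\pounds^\OY_{\cA,1}(t)+f_n(t)+g_n(t)$, and the unfolding of that recursion with the accumulated coefficient $(k-1)!/n!$, is exactly the structure of the paper's argument (Lemma 2.1 followed by induction on $n$), and that part is correct. The gap is in your proof of the shuffle step itself, which is where all the content of the theorem lives. You correctly identify the mechanism --- the partial-fraction identity for the denominators $L_1\cdots L_{n-1}m$, the collapse $\frac{1}{m}\equiv-\frac{1}{L_j}$ on the locus $L_j+m=ip$ producing the exponent $2$, the factor $t^{i\bp}$, and the splitting of the defect into the two shapes appearing in $f_n$ and $g_n$ --- but you then explicitly defer the decisive computation (``the bookkeeping of these boundary contributions'', ``an inclusion--exclusion over the set of vanishing partial sums'') without carrying it out. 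That bookkeeping is precisely what determines the exact content of $f_n$ and $g_n$: the ranges $1\le i\le n-k-1$ versus $1\le i\le n-k-2$, the indices $(\{1\}^{n-k-2},2)$ versus $\{1\}^{n-k-2}$, and the attached factors $\pounds^\OY_{\cA,\{1\}^k}(t)$ versus $\pounds^\OY_{\cA,(2,\{1\}^k)}(t)$. As written, the argument would ``prove'' the lemma for any error terms of roughly this shape.

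There are two concrete obstacles on your route that the paper's proof is designed to avoid. First, the termwise identity $\frac{1}{L_1\cdots L_{n-1}m}=\sum_{s=1}^n\frac{1}{M_1^{(s)}\cdots M_n^{(s)}}$ holds over $\bQ$, but on tuples where some inserted partial sum $L_j+m$ is divisible by $p$ the individual summands on the right are not $p$-integral, so they cannot be reduced modulo $p$ one at a time; you must regroup before reducing. Second, several partial sums $L_j+m$ can vanish modulo $p$ simultaneously, and a given shuffled tuple arises from several insertion positions $s$, which is exactly the inclusion--exclusion you postpone. The paper sidesteps both issues by introducing the interpolating quantities $F_k(t)=\pounds^\OY_{\cA}(\{1\}^{n-k-1},(1),\{1\}^k;t)$ and moving the extra variable past one partial sum at a time via \cite[Proposition 3.7]{OY}, so that at each step only one new partial sum is created and only its divisibility by $p$ has to be analyzed; Lemma 2.1 then follows by telescoping. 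To complete your proof you would either have to carry out the full inclusion--exclusion, or restructure the argument as such a one-step-at-a-time recursion --- at which point you are essentially reproving the cited proposition of Ono--Yamamoto.
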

We remark that this equality can be regarded as an finite analogue of the well-known formula $\Li_{\{1\}^n}(t)=\frac{1}{n!}\Li_1(t)^n$, where $\Li_{\bk}(t):=\sum_{0<n_1<\cdots<n_r}\frac{t^{n_r}}{n^{k_1}_1\cdots n^{k_r}_r}$ is the (one variable) multiple polylogarithm. 

As a corollary of our main theorem and the equality $\pounds^\OY_{\cA, 1}(t)=\pounds^\OY_{\cA, 1}(1-t)$, we obtain $``t \leftrightarrow 1-t"$ type functional equations of FMPs of OY-type.

\begin{corollary}\label{1-t}
For a positive integer $n$, set
$$
\cL_{\cA, n}(t):=\pounds^\OY_{\cA, \{1\}^n}(t)-\frac{1}{n!}\sum_{k=1}^n(k-1)!(f_k(t)+g_k(t))\pounds^\OY_{\cA, 1}(t)^{n-k}.
$$
Then we obtain 
$$
\cL_{\cA, n}(t)=\cL_{\cA, n}(1-t).
$$
\end{corollary}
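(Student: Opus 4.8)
The plan is to reduce the statement directly to Theorem~\ref{main theorem} together with the depth-one functional equation $\pounds^\OY_{\cA, 1}(t)=\pounds^\OY_{\cA, 1}(1-t)$ recalled in the introduction (the result of Kontsevich and of Elbaz-Vincent--Gangl). The key observation is that the sum subtracted off in the definition of $\cL_{\cA, n}(t)$ is by design exactly the ``error term'' appearing on the right-hand side of \eqref{fin_analogue_of_shrel}, so that subtracting it isolates the leading term.

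First I would substitute the expression \eqref{fin_analogue_of_shrel} for $\pounds^\OY_{\cA, \{1\}^n}(t)$ into the definition of $\cL_{\cA, n}(t)$. The correction sum $\frac{1}{n!}\sum_{k=1}^n(k-1)!(f_k(t)+g_k(t))\pounds^\OY_{\cA, 1}(t)^{n-k}$ coming from the theorem cancels term-by-term against the identical quantity subtracted in the definition of $\cL_{\cA, n}(t)$, leaving the clean identity
$$
\cL_{\cA, n}(t)=\frac{1}{n!}\pounds^\OY_{\cA, 1}(t)^n.
$$

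Once this is in hand, the proof concludes by raising the equality $\pounds^\OY_{\cA, 1}(t)=\pounds^\OY_{\cA, 1}(1-t)$ to the $n$th power inside the commutative $\bQ$-algebra $\cA_{\bZ[t]}$, which yields $\pounds^\OY_{\cA, 1}(t)^n=\pounds^\OY_{\cA, 1}(1-t)^n$, and therefore
$$
\cL_{\cA, n}(t)=\frac{1}{n!}\pounds^\OY_{\cA, 1}(t)^n=\frac{1}{n!}\pounds^\OY_{\cA, 1}(1-t)^n=\cL_{\cA, n}(1-t).
$$
I expect essentially no obstacle: all of the combinatorial content is absorbed into Theorem~\ref{main theorem}, and the corollary is a formal consequence. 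The only point deserving a moment of care is to confirm that the two correction sums agree exactly, including the convention that empty sums vanish, which governs the small cases (for instance $n=1$, where the sum is empty and $\cL_{\cA, 1}(t)=\pounds^\OY_{\cA, 1}(t)$ reduces the statement to the depth-one equation itself).
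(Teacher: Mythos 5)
Your proposal is correct and is essentially identical to the paper's own proof: the paper likewise observes that Theorem~\ref{main theorem} gives $\cL_{\cA, n}(t)=\frac{1}{n!}\pounds^\OY_{\cA, 1}(t)^n$ and then invokes the Elbaz-Vincent--Gangl identity $\pounds^\OY_{\cA, 1}(t)=\pounds^\OY_{\cA, 1}(1-t)$. Your extra remarks about applying the theorem at $1-t$, the commutativity of $\cA_{\bZ[t]}$, and the empty-sum convention for $n=1$ are just slightly more explicit versions of steps the paper leaves implicit.
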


\begin{proof}
By Theorem \ref{main theorem}, we have $\cL_{\cA, n}(t)=\frac{1}{n!}\pounds^\OY_{\cA, 1}(t)^n$. Therefore, the assertion holds by the result of Elbaz-Vincent and Gangl \cite[PROPOSITION 5.9 (1)]{EG}.
\end{proof}

The contents of this article is as follows. In Section 2, we prove Theorem \ref{main theorem} by using the shuffle relation of FMPs of OY-type. In Section 3, by using our main theorem, we give examples of functional equations of FMPs of OY-type. In the final section, by using the relation between FMPs of OY-type and SS-type, we also give functional equations of FMPs of SS-type, which seem to be difficult to obtain only using the results of \cite{SS}.

\section{Proof of the main theorem}

In this section, we prove the main theorem by using the shuffle relation of FMPs of OY-type, which was proved by the author and S. Yamamoto in \cite{OY}.

First, we explicitly calculate the shuffle relation of $\pounds^\OY_{\cA, \{1\}^{n-1}}(t)$ and $\pounds^\OY_{\cA, 1}(t)$ for a positive integer $n$.

\begin{lemma}\label{sh prod}
For a positive integer $n$,  we have
\begin{equation*}
\pounds^\OY_{\cA,\{1\}^{n-1}}(t)\pounds^\OY_{\cA,1}(t)=n\pounds^\OY_{\cA, \{1\}^n}(t)-f_n(t)-g_n(t).
\end{equation*}
Recall the definitions of $f_n(t)$ and $g_n(t)$. See \eqref{f_n} and \eqref{g_n}.
\end{lemma}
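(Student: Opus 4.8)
The plan is to compute the product $\pounds^\OY_{\cA,\{1\}^{n-1}}(t)\pounds^\OY_{\cA,1}(t)$ directly from the defining sums and to extract the shuffle. Writing $s_j:=l_1+\cdots+l_j$ for the partial sums of the first factor and $m$ for the single summation variable of the second, the product is the single sum
\begin{equation*}
\pounds^\OY_{\cA,\{1\}^{n-1}}(t)\pounds^\OY_{\cA,1}(t)=\sideset{}{'}\sum_{\substack{0<l_1,\ldots,l_{n-1}<p\\0<m<p}}\frac{t^{s_{n-1}+m}}{s_1\cdots s_{n-1}\cdot m},
\end{equation*}
where the prime means $p\nmid s_1,\ldots,s_{n-1}$ (the condition $p\nmid m$ being automatic). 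The engine of the proof is the partial-fraction (``summand shuffle'') identity
\begin{equation*}
\frac{1}{s_1\cdots s_{n-1}}\cdot\frac1m=\sum_{k=0}^{n-1}\frac{1}{s_1\cdots s_k\,(s_k+m)(s_{k+1}+m)\cdots(s_{n-1}+m)}\qquad(s_0:=0),
\end{equation*}
which one proves by iterating $\tfrac{1}{XY}=\tfrac1{X(X+Y)}+\tfrac1{Y(X+Y)}$ and which is valid whenever every factor is invertible modulo $p$, i.e.\ off the ``diagonals'' $s_j+m\equiv0\pmod p$. The $k$-th term on the right is exactly the summand of $\pounds^\OY_{\cA,\{1\}^n}(t)$ attached to the chain obtained by inserting $m$ as the $(k{+}1)$-st increment; summed over the range where all of its partial sums are prime to $p$ it reproduces $\pounds^\OY_{\cA,\{1\}^n}(t)$. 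Hence the generic contribution of the $n$ insertions is the main term $n\,\pounds^\OY_{\cA,\{1\}^n}(t)$.

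First I would record this main term and then account for every place where the two manipulations above fail; both failure loci are supported where some partial sum is an exact multiple of $p$. The first is the locus where the summand-shuffle identity itself breaks down, namely where $s_j+m\equiv0\pmod p$ for some $j$: such terms occur in the product but have no generic shuffle expansion. The second is the mismatch between the summation range of the product (which imposes $p\nmid s_j$) and the natural range of each insertion term (which instead imposes $p\nmid s_j+m$); this discrepancy is localized on $s_j\equiv0\pmod p$. I would organize both by inclusion--exclusion, the dominant contribution coming from the vanishing of a single partial sum.

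On each locus the sum factors through the index $j$ of the vanishing partial sum and through the band $i$ with the relevant partial sum lying in $((i-1)p,ip)$. The portion of the chain before $j$ assembles, after restriction to the band and cancellation of the residues that are $\equiv0$, into a variant FMZV $\zeta^{(i)}_\cA$ carrying the factor $t^{i\bp}$, while the portion after $j$ becomes a lower-weight FMP, a collision $\tfrac1x\cdot\tfrac1y$ with $x\equiv y\pmod p$ contributing a single squared denominator, i.e.\ a weight-$2$ entry, at the junction. After relabelling the suffix and, where necessary, applying one further partial-fraction step to recombine the surviving cross-terms, this $2$ is found either at the end of the FMZV factor --- producing the summands $\zeta^{(i)}_\cA(\{1\}^{\bullet},2)\,t^{i\bp}\,\pounds^\OY_{\cA,\{1\}^\bullet}(t)$ of $f_n(t)$ --- or at the front of the polylogarithm factor --- producing the summands $\zeta^{(i)}_\cA(\{1\}^{\bullet})\,t^{i\bp}\,\pounds^\OY_{\cA,(2,\{1\}^\bullet)}(t)$ of $g_n(t)$. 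The ranges $1\le i\le n-k-1$ and $1\le i\le n-k-2$ reflect that a partial sum of $n-k-1$ increments from $(0,p)$ can meet at most that many bands. As a consistency check one recovers $n=2$ at once: the only boundary is $s_1+m=p$, where $\tfrac1{s_1 m}\equiv-\tfrac1{s_1^{2}}$ gives $-\zeta^{(1)}_\cA(2)\,t^{\bp}=-f_2(t)$, with $g_2(t)=0$.

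The main obstacle will be precisely the boundary bookkeeping of the last step. I must pin down the sign produced by each collision $x\equiv y\pmod p$, confirm that simultaneous vanishings of several partial sums produce no net new contribution, and verify that each prefix sum, once restricted to a band and relabelled, is genuinely the asserted $\zeta^{(i)}_\cA$ of the correct depth with the weight-$2$ entry in the correct slot. The delicate point is to show that the numerous cross-terms generated by the range mismatch and by the degenerate diagonals reorganize into exactly the two families $f_n(t)$ and $g_n(t)$ with no leftover lower-weight polylogarithms; it is here that the explicit shapes $\{1\}^{\bullet}$, the trailing versus leading $2$, and the band ranges are all forced.
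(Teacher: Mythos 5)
Your overall strategy --- expand $\frac{1}{s_1\cdots s_{n-1}m}$ by the iterated partial-fraction identity, recognize each of the $n$ resulting terms as a reparametrized summand of $\pounds^\OY_{\cA,\{1\}^n}(t)$, and then account for the loci where some partial sum is divisible by $p$ --- is exactly the mechanism behind the lemma, and your $n=2$ consistency check is correct. However, the proposal stops precisely where the lemma begins: the entire content of the statement is that the boundary contributions equal $-f_n(t)-g_n(t)$, and you defer this (``I must pin down the sign\dots'', ``the delicate point is to show that the numerous cross-terms \dots\ reorganize into exactly the two families'') rather than prove it. Two concrete points are left unresolved. First, the two mismatch loci you identify (where $s_j+m\equiv 0 \bmod p$, so the partial-fraction identity cannot be applied, and where $s_j\equiv 0$ for $j>k$, which the insertion term allows but the product's range forbids) can occur simultaneously for several indices $j$ --- for instance $s_1+m\equiv s_3+m\equiv 0$ is possible, since only $s_1,\ldots,s_{n-1}$, and not their differences, are constrained to be prime to $p$ --- and your inclusion--exclusion over these higher-order collisions is asserted to contribute nothing but not argued. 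Second, the identification of each single-collision stratum with a specific product $\zeta^{(i)}_\cA(\cdots)\,t^{i\bp}\cdot\pounds^\OY_{\cA,\cdots}(t)$, with the weight-$2$ entry in the correct slot and the correct band range for $i$, is exactly the computation that pins down $f_n$ and $g_n$; describing its expected outcome is not a proof of it.

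For comparison, the paper avoids the all-at-once bookkeeping entirely: it interpolates between the product and $\pounds^\OY_{\cA,\{1\}^n}(t)$ by the chain $F_k(t):=\pounds^\OY_{\cA}(\{1\}^{n-k-1},(1),\{1\}^k;t)$ for $0\le k\le n-1$, with $F_0$ equal to the product and $F_{n-1}=\pounds^\OY_{\cA,\{1\}^n}(t)$, and applies the already-established one-step shuffle relation \cite[Proposition 3.7]{OY}, in which the boundary analysis for moving a single variable past one slot has been done once and for all and produces exactly the $k$-th summands of $f_n(t)$ and $g_n(t)$ as error terms; telescoping over $k$ then gives the lemma immediately. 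To complete your direct argument you would in effect have to reprove that proposition in its iterated form; the cleaner route is to perform the insertion one slot at a time, which is precisely what the intermediate objects $F_k$ accomplish.
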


\begin{proof}
For $0 \leq k \leq n-1$, set
$$
F_k(t):=\pounds^\OY_{\cA}(\{1\}^{n-k-1}, (1), \{1\}^k; t).
$$
Here, for indices $\blambda = (\lambda_1, \ldots, \lambda_a), \bmu =(\mu_1, \ldots, \mu_b)$ and $\bnu=(\nu_1, \ldots, \nu_c)$ ($a, b, c \in \bZ_{\geq0}$), 
$\pounds^\OY_{\cA}(\blambda, \bmu, \bnu; t)$ is the FMP of type $(\blambda, \bmu, \bnu)$ \cite[Definition 3.1]{OY} defined by 
$$
\pounds^\OY_{\cA}(\blambda, \bmu, \bnu; t)
:=\sideset{}{'}\sum_{\substack{0<l_1, \ldots, l_a <p \\ 0<m_1, \ldots, m_b <p \\0<n_1, \ldots, n_c <p}}
\frac{t^{L_a+M_b+N_c}}{
\displaystyle\prod_{x=1}^a L^{\lambda_x}_x
\displaystyle\prod_{y=1}^b M^{\mu_y}_y
\displaystyle\prod_{z=1}^c \left(L_a+M_b+N_z\right)^{\nu_z}
}
\in \cA_{\bZ[t]},
$$
where $L_x:=l_1+\cdots+l_x, M_y:=m_1+\cdots+m_y$ and $N_z:=n_1+\cdots+n_z$.
By \cite[Remark 3.2]{OY}, we have $F_0(t)=\pounds^\OY_{\cA,\{1\}^{n-1}}(t)\pounds^\OY_{\cA,1}(t)$ and $F_{n-1}(t)=\pounds^\OY_{\cA, \{1\}^n}(t)$.
By using \cite[Proposition 3.7]{OY} in the case $\blambda:=\{1\}^{n-k-1}, \bmu:=(1)$ and $\bnu:=\{1\}^k$ $(0\leq k \leq n-2)$, we obtain
\begin{multline}\label{eq:reccurence}
F_k(t)=F_{k+1}(t)+\pounds^\OY_{\cA, \{1\}^n}(t)-\left(\sum_{i=1}^{n-k-1}\zeta^{(i)}_\cA(\{1\}^{n-k-2}, 2)t^{i\bp}\right)\pounds^\OY_{\cA, \{1\}^k}(t)\\
            -\left(\sum_{i=1}^{n-k-2}\zeta^{(i)}_\cA(\{1\}^{n-k-2})t^{i\bp}\right)\pounds^\OY_{\cA,(2, \{1\}^k)}(t).
\end{multline}
Therefore, the statement holds from taking the telescoping sum of \eqref{eq:reccurence}.
\end{proof}

\begin{proof}[Proof of Theorem \ref{main theorem}]
We prove the statement by the induction on $n \geq1$.
Note that the statement for $n=1$ holds by \cite[(A)]{K} or \cite[PROPOSITION 5.9]{EG}. 
For $n \geq 2$, assume that the statement holds for $n-1$:
\begin{equation}\label{eq:r-1}
\pounds^\OY_{\cA, \{1\}^{n-1}}(t)=\frac{1}{(n-1)!}\pounds^\OY_{\cA, 1}(t)^{n-1}+\frac{1}{(n-1)!}\sum_{k=1}^{n-1}(k-1)!(f_k(t)+g_k(t))\pounds^\OY_{\cA, 1}(t)^{n-k-1}.
\end{equation}
By Lemma \ref{sh prod}, the product of the left hand side of \eqref{eq:r-1} and $\pounds^\OY_{\cA,1}(t)$ coincides with
\begin{equation}\label{eq:1*n-1}
\pounds^\OY_{\cA,\{1\}^{n-1}}(t)\pounds^\OY_{\cA,1}(t)=n\pounds^\OY_{\cA, \{1\}^n}(t)-f_n(t)-g_n(t).
\end{equation}
On the other hand, the product of the right hand side of \eqref{eq:r-1} and $\pounds^\OY_{\cA,1}(t)$ coincides with
 
\begin{multline}\label{eq:n-1*1}
  \frac{1}{(n-1)!}\pounds^\OY_{\cA,1}(t)^n+\frac{1}{(n-1)!}\sum_{k=1}^{n-1}(k-1)!(f_k(t)+g_k(t))\pounds^\OY_{\cA,1}(t)^{n-k}\\
=n\left(\frac{1}{n!}\pounds^\OY_{\cA, 1}(t)^n+\frac{1}{n!}\sum_{k=1}^{n-1}(k-1)!(f_k(t)+g_k(t))\pounds^\OY_{\cA, 1}(t)^{n-k}\right).
\end{multline}
Therefore, by \eqref{eq:1*n-1} and \eqref{eq:n-1*1}, we see that the statement holds for $n$.
\end{proof}

\section{Functional equations of finite multiple polylogarithms of Ono-Yamamoto type}

Next, we give examples of our main theorem for $n=2, 3$ and $4$. We use the following lemmas for calculating our examples.

\begin{lemma}[{\cite[Theorem 4.3]{H}}]\label{repitition}
For any positive integer $k$ and $r$, we have
$\zeta_\cA(\underbrace{k, \ldots, k}_r)=0$. In particular, we have $\zeta_\cA(k)=0$ for any positive integer $k$.
\end{lemma}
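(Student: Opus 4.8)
The plan is to recognize the repeated-index sum as an \emph{elementary symmetric polynomial} in the residues $n^{-k}$ and then to kill it using Newton's identities together with the classical evaluation of power sums modulo a prime. Fix $k$ and $r$, and work modulo a prime $p$ with $p-1 > kr$; since we only need the statement for all but finitely many $p$, this restriction costs nothing. For such $p$, set $x_n := n^{-k} \bmod p$ for $1 \le n \le p-1$, which is well defined because each such $n$ is invertible in $\bF_p$. The defining sum, whose constraint $0 < n_1 < \cdots < n_r < p$ forces all denominators to be prime to $p$, then reads
\[
\zeta_\cA(\{k\}^r) \equiv \sum_{0 < n_1 < \cdots < n_r < p} x_{n_1} \cdots x_{n_r} = e_r(x_1, \ldots, x_{p-1}) \pmod{p},
\]
where $e_r$ is the $r$-th elementary symmetric polynomial. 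So it suffices to prove $e_r(x_1, \ldots, x_{p-1}) \equiv 0 \pmod p$ for every $p$ with $p-1 > kr$.

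Next I would compute the associated power sums $s_m := \sum_{n=1}^{p-1} x_n^{\,m} = \sum_{n=1}^{p-1} n^{-km} \bmod p$ for $1 \le m \le r$. Since $n \mapsto n^{-1}$ permutes $\bF_p^\times$, one has $\sum_{n=1}^{p-1} n^{-km} \equiv \sum_{n=1}^{p-1} n^{km} \pmod p$, and the standard power-sum evaluation over $\bF_p^\times$ gives, for any integer $j \ge 1$,
\[
\sum_{n=1}^{p-1} n^{\,j} \equiv \begin{cases} -1 \pmod p & \text{if } (p-1) \mid j,\\ 0 \pmod p & \text{if } (p-1) \nmid j. \end{cases}
\]
In our range $1 \le km \le kr < p-1$, so $(p-1) \nmid km$ and hence $s_m \equiv 0 \pmod p$ for every $m = 1, \ldots, r$.

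Finally I would feed this into Newton's identity $r\, e_r = \sum_{i=1}^{r} (-1)^{i-1} e_{r-i}\, s_i$. Every summand on the right carries a factor $s_i$ with $1 \le i \le r$, all of which vanish modulo $p$, so $r\, e_r \equiv 0 \pmod p$. Because $p-1 > kr \ge r$, the integer $r$ is invertible modulo $p$, and therefore $e_r \equiv 0$, i.e. $\zeta_\cA(\{k\}^r) \equiv 0 \pmod p$ for all $p$ with $p-1 > kr$; this is exactly the vanishing in $\cA$, and the case $r=1$ yields $\zeta_\cA(k)=0$. The only points requiring genuine care are the reindexing $n \mapsto n^{-1}$ and the bookkeeping ensuring a single size bound on $p$ makes both the power-sum vanishing and the invertibility of $r$ hold simultaneously; neither is a real obstacle, so the essential step is the initial recognition of the elementary-symmetric-polynomial structure.
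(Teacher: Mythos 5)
Your argument is correct and complete. The paper itself offers no proof of this lemma: it is imported wholesale as a citation to Hoffman \cite[Theorem 4.3]{H}, so there is no in-text argument to compare against. What you have written is a valid, self-contained proof of the cited fact. The identification $\zeta_\cA(\{k\}^r)\equiv e_r(1^{-k},\ldots,(p-1)^{-k})\pmod p$ is exactly right, the power sums $s_m=\sum_{n=1}^{p-1}n^{-km}\equiv\sum_{n=1}^{p-1}n^{km}$ vanish for $1\le m\le r$ once $p-1>kr$ by the standard evaluation of $\sum_{n=1}^{p-1}n^j$ over $\bF_p^{\times}$, and Newton's identity $r\,e_r=\sum_{i=1}^{r}(-1)^{i-1}e_{r-i}s_i$ (a polynomial identity over $\bZ$, hence valid in $\bF_p$ since the number of variables $p-1$ exceeds $r$) then kills $e_r$ outright, with no induction needed, because every term on the right carries a vanishing factor $s_i$ and $r<p$ is invertible. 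Your bookkeeping that a single bound $p-1>kr$ suffices for both the vanishing of the power sums and the invertibility of $r$, and that excluding finitely many primes is harmless in $\cA$, is also correct. This symmetric-function route is essentially the standard mechanism behind Hoffman's treatment (which works systematically with quasi-symmetric functions and power sums mod $p$), so you have not so much diverged from the source as made the citation self-contained; the only thing the citation buys over your argument is that Hoffman's framework handles general indices and related evaluations uniformly, whereas your proof is tailored to the repeated-index case --- which is all this lemma needs.
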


\begin{lemma}[{\cite[Table 2]{Sa}}, for example]\label{weight 4}
For any index $\bk$ of weight 4, we have $\zeta_\cA(\bk)=0$.
\end{lemma}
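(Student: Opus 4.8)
The assertion is that the $\bQ$-span of all weight-$4$ FMZVs is $0$, so the plan is to run through the eight indices $\bk$ with $\wt(\bk)=4$ and reduce each $\zeta_\cA(\bk)$ to $0$ using three tools: Lemma~\ref{repitition}, the reversal relation, and the harmonic (stuffle) product. First I would record the reversal relation $\zeta_\cA(k_1,\dots,k_r)=(-1)^{\wt(\bk)}\zeta_\cA(k_r,\dots,k_1)$, which is immediate from the change of variables $n_i\mapsto p-n_{r+1-i}$ in the defining sum modulo~$p$. The repetition and low-depth cases are then quick: Lemma~\ref{repitition} kills $(4)$, $(2,2)$ and $(1,1,1,1)$, and in depth $2$ the stuffle identity $0=\zeta_\cA(1)\zeta_\cA(3)=\zeta_\cA(1,3)+\zeta_\cA(3,1)+\zeta_\cA(4)$ combined with reversal ($\zeta_\cA(1,3)=\zeta_\cA(3,1)$) forces $\zeta_\cA(1,3)=\zeta_\cA(3,1)=0$.

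The real work is the depth-$3$ block $(1,1,2),(1,2,1),(2,1,1)$. Reversal gives $\zeta_\cA(1,1,2)=\zeta_\cA(2,1,1)$, and I expect that \emph{every} stuffle product landing in weight $4$ --- for instance $\zeta_\cA(1)\zeta_\cA(1,2)$, $\zeta_\cA(1)\zeta_\cA(2,1)$ and $\zeta_\cA(2)\zeta_\cA(1,1)$ --- collapses, after inserting the vanishings already obtained, to the single relation $2\zeta_\cA(1,1,2)+\zeta_\cA(1,2,1)=0$. Thus stuffle together with reversal provably leaves a one-parameter family $\bigl(\zeta_\cA(1,1,2),\zeta_\cA(1,2,1),\zeta_\cA(2,1,1)\bigr)=\bigl(c,-2c,c\bigr)$ and cannot by itself conclude $c=0$.

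This is exactly where the main obstacle lies, and it is a genuine one: the elementary relations are insufficient for the depth-$3$ values, so I would bring in one further, deeper input to pin down $c$. The cleanest is the finite sum formula (Saito--Wakabayashi), whose unique admissible weight-$4$, depth-$3$ term is $\zeta_\cA(1,1,2)$ and whose right-hand side vanishes here, giving $c=0$; equivalently one may invoke the known identification of $\zeta_\cA$-values with symmetric multiple zeta values, which yields $\dim_\bQ \mathcal{Z}^{(4)}_\cA=0$ directly. Either route closes the depth-$3$ case, and all the resulting values are the ones tabulated in \cite{Sa}.
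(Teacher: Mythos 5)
Your argument is essentially correct, but it takes a genuinely different route from the paper: the paper offers no proof at all for this lemma, simply citing Saito's numerical tables \cite[Table 2]{Sa}, whereas you derive the vanishing from first principles. Your bookkeeping checks out. Hoffman's theorem (Lemma \ref{repitition}) disposes of $(4)$, $(2,2)$, $(1,1,1,1)$; the reversal relation carries no sign in even weight, so combined with the stuffle $\zeta_\cA(1)\zeta_\cA(3)=\zeta_\cA(1,3)+\zeta_\cA(3,1)+\zeta_\cA(4)$ it kills depth $2$; and in depth $3$ every stuffle product landing in weight $4$ (e.g.\ $\zeta_\cA(1)\zeta_\cA(1,2)=2\zeta_\cA(1,1,2)+\zeta_\cA(1,2,1)+\zeta_\cA(2,2)+\zeta_\cA(1,3)$, and likewise $\zeta_\cA(1)\zeta_\cA(2,1)$, $\zeta_\cA(2)\zeta_\cA(1,1)$, $\zeta_\cA(1,1)^2$, $\zeta_\cA(1)\zeta_\cA(1,1,1)$) indeed collapses to the single relation $2\zeta_\cA(1,1,2)+\zeta_\cA(1,2,1)=0$, so you are right that an extra input is needed. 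The Saito--Wakabayashi sum formula supplies it cleanly: for weight $4$, depth $3$ and $i=3$ the only admissible term is $\zeta_\cA(1,1,2)$, and the right-hand side is a multiple of $B_{p-4}/4$, which vanishes in $\cA$ since $p-4$ is odd. What your approach buys is a self-contained, checkable derivation exhibiting exactly which relations are elementary and where the deeper input enters; what the paper's citation buys is brevity and no dependence on the sum formula.

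One caution: your ``equivalently'' fallback --- invoking the identification of $\zeta_\cA$-values with symmetric multiple zeta values to get $\dim_\bQ$ of the weight-$4$ part equal to $0$ --- is not a valid substitute, since that identification is the Kaneko--Zagier \emph{conjecture}, not a theorem. Keep the sum-formula route as the actual proof and drop, or explicitly flag as conjectural, the alternative.
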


\begin{example}\label{eg}
\begin{enumerate}
\item
First, we consider the case $n=2$. By the definition of $\zeta^{(i)}_\cA$ and Lemma \ref{repitition}, we obtain $f_2(t)=\zeta_\cA(2)t^{\bp}=0$. By the definition of $g_2(t)$, we have $g_2(t)=0$. Thus Theorem \ref{main theorem} says the equality $\pounds^\OY_{\cA, (1,1)}(t)=\frac{1}{2}\pounds^\OY_{\cA, 1}(t)^2$. Moreover, by Corollary \ref{1-t}, we obtain Seki's result $\pounds^\OY_{\cA, (1,1)}(t)=\pounds^\OY_{\cA, (1,1)}(1-t)$. 

\item Next, we consider the case of $n=3$. By an easy calculation and $\zeta^{(2)}_\cA(1,2)=-\zeta_\cA(1,2)$ \cite[Remark 2.2]{OY}, $f_3(t)$ can be calculated as follows.
\begin{align}\label{f_3}
f_3(t)=\zeta^{(1)}_\cA(1,2)t^{\bp}+\zeta^{(2)}_\cA(1,2)t^{2\bp}
        =\zeta_\cA(1,2)t^{\bp}(1-t^{\bp})=\zeta_\cA(1,2)t^{p}(1-t)^{\bp}.
\end{align}
The last equality holds since $1-t^p=(1-t)^p$ holds in $(\bZ/p\bZ)[t]$ for all primes $p$. On the other hand, we have $g_3(t)=\zeta_\cA(1)t^{\bp}\pounds^\OY_{\cA, 2}(t)=0$ by Lemma \ref{repitition}. Therefore, Theorem \ref{main theorem} says the equality
$$
\pounds^\OY_{\cA, \{1\}^3}(t)= \frac{1}{3!}\pounds^\OY_{\cA, 1}(t)^3+\frac{1}{3}\zeta_\cA(1,2)t^{\bp}(1-t)^{\bp}.
$$
Moreover, since $f_3(t)=f_3(1-t)$ by \eqref{f_3}, we see that Corollary \ref{1-t} says the equality $\pounds^\OY_{\cA, \{1\}^3}(t)=\pounds^\OY_{\cA, \{1\}^3}(1-t)$. 

\item Furthermore, we consider the case of $n=4$. By an easy calculation, we obtain
\begin{align}\label{f_4}
f_4(t)=&\zeta_\cA(1,1,2)t^{\bp}+\zeta^{(2)}_\cA(1,1,2)t^{2{\bp}}+\zeta^{(3)}_\cA(1,1,2)t^{3{\bp}}\nonumber\\
           &+(\zeta_\cA(1,2)t^{\bp}+\zeta^{(2)}_\cA(1,2)t^{2\bp})\pounds^\OY_{\cA, 1}(t)+\zeta_\cA(2)t^{\bp}\pounds^\OY_{\cA, (1,1)}(t)\nonumber\\
        =&\zeta_\cA(1,1,2)t^{\bp}(1-t^{2\bp})+\zeta^{(2)}_\cA(1,1,2)t^{2\bp}+f_3(t)\pounds^\OY_{\cA, 1}(t).
\end{align}
By \cite[Example 2.6, (ii)]{OY}, $\zeta^{(2)}_\cA(1,1,2)$ is a sum of FMZVs of weight 4, we see that $f_4(t)=f_3(t)\pounds^\OY_{\cA, 1}(t)$ by Lemma \ref{weight 4}. 
On the other hand, since $\zeta_\cA(1,1)=\zeta^{(2)}_\cA(1,1)=0$ by Lemma \ref{repitition}, we have
$$
g_4(t)=(\zeta_\cA(1,1)t^{\bp}+\zeta^{(2)}_\cA(1,1)t^{2\bp})\pounds^\OY_{\cA, 2}(t)+\zeta_\cA(1)t^{\bp}\pounds^\OY_{\cA, (2,1)}(t)=0.
$$
Therefore, we obtain
\begin{align*}
\pounds^\OY_{\cA, \{1\}^4}(t)=\frac{1}{24}\pounds^\OY_{\cA, 1}(t)^4+\frac{1}{3}\zeta_\cA(1,2)t^{\bp}(1-t)^{\bp}\pounds^\OY_{\cA, 1}(t).
\end{align*}
Moreover, since $f_4(t)=f_4(1-t)$ by \eqref{f_4}, we have $\pounds^\OY_{\cA, \{1\}^4}(t)=\pounds^\OY_{\cA, \{1\}^4}(1-t)$.

\item
Finally, consider the case of $n=5$. In this case, it is difficult to expect the $"t \leftrightarrow 1-t"$ type relation of $\pounds^\OY_{\cA, \{1\}^n}(t)$ for $n \geq5$. 

First, by \cite[Example 2.6]{OY} and Lemma \ref{repitition}, we have $\zeta_\cA(1,1,1)$$=-\zeta^{(3)}_\cA(1,1,1)=0$ and $\zeta^{(2)}_\cA(1,1,1)=4\zeta_\cA(1,1,1)+\zeta_\cA(2,1)+\zeta_\cA(1,2)=0$. Therefore, we obtain
\begin{align*}
g_5(t)=&(\zeta_\cA(1,1,1)t^{\bp}+\zeta^{(2)}_\cA(1,1,1)t^{2\bp}+\zeta^{(3)}_\cA(1,1,1)t^{3\bp})\pounds^\OY_{\cA, 2}(t)\\
            &+(\zeta_\cA(1,1)t^{\bp}+\zeta^{(2)}_\cA(1,1)t^{2\bp})\pounds^\OY_{\cA, (2,1)}(t)+\zeta_\cA(1)t^{\bp}\pounds^\OY_{\cA, (2,1,1)}(t)
          =0.
\end{align*}

Thus, Theorem \ref{main theorem} says that
\begin{align*}
\pounds^\OY_{\cA, \{1\}^5}(t)=&\frac{1}{5!}\pounds^\OY_{\cA, 1}(t)^5+\frac{2!f_3(t)\pounds^\OY_{\cA, 1}(t)^2+3!f_4(t)\pounds^\OY_{\cA, 1}(t)+4!f_5(t)}{5!}\\
                                              =&\frac{1}{5!}\pounds^\OY_{\cA, 1}(t)^5+\frac{1}{15}f_3(t)\pounds^\OY_{\cA, 1}(t)^2+\frac{1}{5}f_5(t).
\end{align*}
Since $f_3(t)\pounds^\OY_{\cA, 1}(t)^2=f_3(1-t)\pounds^\OY_{\cA, 1}(1-t)^2$, we have
$$
\pounds^\OY_{\cA, \{1\}^5}(t)-\frac{1}{5}f_5(t)=\pounds^\OY_{\cA, \{1\}^5}(1-t)-\frac{1}{5}f_5(1-t).
$$
Next, we have
\begin{align*}
f_5(t)=&\zeta^{(1)}_\cA(1,1,1,2)t^{\bp}+\zeta^{(2)}_\cA(1,1,1,2)t^{2\bp}+\zeta^{(3)}_\cA(1,1,1,2)t^{3\bp}+\zeta^{(4)}_\cA(1,1,1,2)t^{4\bp}\\
           &+\biggl(\zeta^{(1)}_\cA(1,1,2)t^{\bp}+\zeta^{(2)}_\cA(1,1,2)t^{2\bp}+\zeta^{(3)}_\cA(1,1,2)t^{3\bp}\biggr)\pounds^\OY_{\cA, 1}(t)\\
        =&\zeta_\cA(1,1,1,2)(t^{\bp}-t^{4\bp})+\zeta^{(2)}_\cA(1,1,1,2)(t^{2\bp}-t^{3\bp})+f_3(t)\pounds^\OY_{\cA, 1}(t)^2\\
        =&\zeta_\cA(1,1,1,2)t^{\bp}(1-t^{\bp})(1+t^{\bp}+t^{2\bp})+\zeta^{(2)}_\cA(1,1,1,2)t^{2\bp}(1-t^{\bp})+f_3(t)\pounds^\OY_{\cA, 1}(t)^2.
\end{align*} 
Therefore, we see that
\begin{align*}
\pounds^\OY_{\cA, \{1\}^5}(t)-\pounds^\OY_{\cA, \{1\}^5}(1-t)
=&\frac{f_5(t)-f_5(1-t)}{5}\\
=&\frac{2\zeta_\cA(1,1,1,2)+\zeta^{(2)}_\cA(1,1,1,2)}{10}t^{\bp}(1-t^{\bp})(2t^{\bp}-1).
\end{align*}
By \cite[Example 2.6 (2)]{OY} and \cite[Table 2]{Sa}, we see that $\zeta_\cA(1,1,1,2)=B_{\bp-5}$ and $\zeta^{(2)}_\cA(1,1,1,2)=0$. Here, we set $B_{\bp-5}:=(B_{p-5} \bmod{p})_p \in \cA$. Therefore, we obtain
$$
\pounds^\OY_{\cA, \{1\}^5}(t)-\pounds^\OY_{\cA, \{1\}^5}(1-t)=\frac{B_{\bp-5}}{5}t^{\bp}(1-t^{\bp})(2t^{\bp}-1).
$$
Thus, since it is conjectured that $B_{\bp-5}$ does not vanish in $\cA$ (for example, see \cite[Conjecture 2.1]{Z}), we see that $\pounds^\OY_{\cA, \{1\}^5}(t) \neq \pounds^\OY_{\cA, \{1\}^5}(1-t)$.

\end{enumerate}
\end{example}

\section{Functional equations of finite multiple polylogarithms of Sakugawa-Seki type}

We end this article with new functional equations of FMPs of SS-type. 

\begin{definition}[{\cite[Definition 3.8]{SS}}]
Let $r$ be a positive integer and $\bk=(k_1, \ldots, k_r)$ an index. Then we define finite harmonic multiple polylogarithms and 1-variable finite multiple polylogarithms as follows:
$$
\pounds^{*}_{\cA, \bk}(t_1, \ldots, t_r)=\sum_{0<n_1<\cdots<n_r<p}\frac{t^{n_1}_1\cdots t^{n_r}_r}{n^{k_1}_1\cdots n^{k_r}_r }\bmod{p} \in \cA_{\bZ[\boldsymbol{t}]},
$$
$$
\pounds_{\cA, \bk}(t):=\pounds^*_{\cA, \bk}(\{1\}^{r-1}, t) \in \cA_{\bZ[t]},\quad
\widetilde{\pounds}_{\cA, \bk}(t):=\pounds^*_{\cA, \bk}(t, \{1\}^{r-1}) \in \cA_{\bZ[t]}.
$$
Here, for an $r$-tuple of variables $\boldsymbol{t}:=(t_1, \ldots, t_r)$, we set $\cA_{\bZ[\boldsymbol{t}]}:=\left.\bigl(\prod_p (\bZ/p\bZ)[\boldsymbol{t}]\bigr)\right/\bigl(\bigoplus_p (\bZ/p\bZ)[\boldsymbol{t}]\bigr)$. \end{definition}

Now we prepare the following notation to describe the relation between the FMPs of OY-type and SS-type (cf. \cite[Section 2]{OY}). First, for a positive integer $r$, set

$$
[r]:=\{1, 2, \ldots, r\}
$$
and
$$
\Phi_r:=\bigsqcup_{s=1}^r\Phi_{r,s},\quad \Phi_{r,s}:=\{\phi : [r]\rightarrow [s] : \text{surjective}\mid \phi(a)\neq \phi(a+1) \text{ for all } a \in [r-1]\}.
$$
Next, for $\phi \in \Phi_{r,s}$, set $s_\phi:=s$. Furthermore, for $\phi \in \Phi_r$ and $1\leq i \leq r$, we define an integer $\delta_\phi(i)$ by
\begin{equation*} \label{eq:delta}
\delta_\phi(i):=\#\{a \in [i-1] \mid \phi(a)>\phi(a+1)\} \quad (1\leq i \leq r).
\end{equation*}
Finally, a map $\beta : \Phi_r \rightarrow [r]$ is defined by $\beta(\phi):=\delta_{\phi}(r)+1$ and we set 
$$
\Phi^i_r :=\beta^{-1}(i).
$$

\begin{proposition}[{\cite[Proposition 3.26]{SS}}]\label{OYtoSS}
For an index $\bk=(k_1, \ldots, k_r)$, we have
$$
\pounds^\OY_{\cA, \bk}(t)=\sum_{i=1}^rt^{(i-1){p}}\sum_{\phi \in \Phi^i_r}\pounds^*_{\cA, (\sum_{\phi(j)=1}k_j, \ldots, \sum_{\phi(j)=s_\phi}k_j)}(\{1\}^{\phi(r)-1}, t, \{1\}^{s_\phi-\phi(r)}).
$$
\end{proposition}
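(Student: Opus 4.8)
The plan is to expand the defining sum of $\pounds^\OY_{\cA,\bk}(t)$ and to reorganize it according to the combinatorial pattern that the partial sums exhibit modulo $p$. Writing $L_x := l_1+\cdots+l_x$, the OY-polylogarithm is the primed sum over $0<l_1,\ldots,l_r<p$ of $t^{L_r}/\prod_{x=1}^r L_x^{k_x}$, the prime meaning $L_x\not\equiv 0\pmod p$ for every $x$. Since each $l_x\in\{1,\ldots,p-1\}$ we have $0<L_1<\cdots<L_r<rp$ and $L_{x+1}\not\equiv L_x\pmod p$. Working in $\bZ/p\bZ$, each factor $L_x$ in a denominator may be replaced by its residue $n_x:=(L_x\bmod p)\in\{1,\ldots,p-1\}$, and the entire content of the identity is to recognize the resulting expression as a sum of SS-type polylogarithms, whose summation variables $0<m_1<\cdots<m_s<p$ are exactly the \emph{distinct} residues occurring among the $n_x$.

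Concretely, I would first set up a bijection between the index set of the OY-sum and the set of pairs $(\phi,(m_1<\cdots<m_s))$ with $\phi\in\Phi_{r,s}$ and $0<m_1<\cdots<m_s<p$. Given $(l_1,\ldots,l_r)$, let $m_1<\cdots<m_s$ be the distinct values among $n_1,\ldots,n_r$ and let $\phi(x)\in[s]$ be the rank of $n_x$, so that $n_x=m_{\phi(x)}$; then $\phi$ is surjective and $\phi(x)\neq\phi(x+1)$ because $n_{x+1}\neq n_x$, so $\phi\in\Phi_{r,s}$. Conversely, from $(\phi,(m_1,\ldots,m_s))$ I would reconstruct the tuple by setting $L_x:=\delta_\phi(x)\,p+m_{\phi(x)}$ and $l_x:=L_x-L_{x-1}$ (with $L_0:=0$), and then check that each $l_x$ lands in $\{1,\ldots,p-1\}$: at an ascent $\phi(x-1)<\phi(x)$ the block count $\delta_\phi$ is unchanged and $l_x=m_{\phi(x)}-m_{\phi(x-1)}\in(0,p)$, while at a descent $\phi(x-1)>\phi(x)$ one has $\delta_\phi(x)=\delta_\phi(x-1)+1$ and $l_x=p+(m_{\phi(x)}-m_{\phi(x-1)})\in(0,p)$. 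The pivotal bookkeeping fact, which I would isolate as a short lemma, is that $\delta_\phi(x)=\lfloor L_x/p\rfloor$: since $l_x<p$, adding it moves the partial sum across at most one multiple of $p$, and it does so precisely when the residue drops, i.e.\ at a descent of $\phi$. This makes the two constructions mutually inverse.

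Under this bijection the summand transforms exactly as needed. The denominator becomes $\prod_{x=1}^r n_x^{k_x}=\prod_{u=1}^{s}m_u^{\sum_{\phi(x)=u}k_x}$ in $\bZ/p\bZ$, which is the denominator of the SS-polylogarithm with index $(\sum_{\phi(j)=1}k_j,\ldots,\sum_{\phi(j)=s_\phi}k_j)$; and the numerator $t^{L_r}=t^{\delta_\phi(r)p+m_{\phi(r)}}=t^{(\beta(\phi)-1)p}\,t^{m_{\phi(r)}}$ separates into a pure power of $t^p$ and the factor $t^{m_{\phi(r)}}$, the latter being produced by placing the variable $t$ in slot $\phi(r)$ and $1$ elsewhere, i.e.\ by the argument $(\{1\}^{\phi(r)-1},t,\{1\}^{s_\phi-\phi(r)})$. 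Summing over all tuples and then grouping the contributing $\phi$ by the common value $i=\beta(\phi)$, i.e.\ summing $\phi$ over $\Phi^i_r=\beta^{-1}(i)$ for each $i$, so that the factor $t^{(i-1)p}$ can be pulled out of the inner sum, yields the stated formula. I expect the main obstacle to be purely combinatorial, namely the descent/carry dictionary $\delta_\phi(x)=\lfloor L_x/p\rfloor$ together with the verification that the reconstructed $l_x$ always lie in the admissible range; once this is in place, the remainder is a matter of matching summation ranges and index conventions.
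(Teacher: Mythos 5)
The paper does not actually prove this proposition: it imports it verbatim as \cite[Proposition 3.26]{SS}, so there is no internal proof to compare against. Your argument is a correct, self-contained proof of the cited result, and it proceeds by what is essentially the natural (and, in substance, Sakugawa--Seki's) route: the descent/carry bijection. The two directions of your bijection are both sound. Given $(l_1,\ldots,l_r)$ with all $L_x\not\equiv 0\pmod p$, the residues $n_x=L_x\bmod p$ satisfy $n_x\neq n_{x+1}$ because $0<l_{x+1}<p$, so the rank function $\phi$ lands in $\Phi_{r,s}$; and your key lemma $\delta_\phi(x)=\lfloor L_x/p\rfloor$ is exactly right, since adding $l_x\in(0,p)$ crosses a multiple of $p$ if and only if the residue strictly drops, and the excluded case $n_{x-1}+l_x=p$ is precisely the case $L_x\equiv 0$ removed by the primed sum. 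The inverse construction $L_x:=\delta_\phi(x)p+m_{\phi(x)}$ does produce $l_x\in(0,p)$ at both ascents and descents, as you check. The bookkeeping of the summand is also correct: $\prod_x L_x^{k_x}\equiv\prod_u m_u^{\sum_{\phi(x)=u}k_x}$ matches the contracted index, $t^{L_r}=t^{(\beta(\phi)-1)p}\,t^{m_{\phi(r)}}$ accounts for both the prefactor $t^{(i-1)p}$ after grouping over $\Phi^i_r=\beta^{-1}(i)$ and the placement of $t$ in slot $\phi(r)$. This is a complete proof; the only thing I would add for a written version is the one-line justification that $\phi$ is surjective (the $m_u$ are by definition the values actually attained) and the explicit remark that $\beta(\phi)\leq r$ so the outer sum over $i$ indeed stops at $r$.
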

By Proposition \ref{OYtoSS}, our main theorem gives functional equations of FMPs of SS-type. It seems very difficult to obtain our functional equations of FMPs of SS-type only using Sakugawa-Seki's theory \cite{SS} and without using the shuffle relation of FMPs of OY-type. We describe only two functional equations of FMPs of SS-type which are obtained from that of FMPs of OY-type with indices $\{1\}^3$ and $\{1\}^4$.

\begin{corollary}
We have
\begin{align*}
&(1+t^{\bp})\pounds_{\cA, \{1\}^3}(t)+t^{\bp}(1+t^{\bp})\widetilde{\pounds}_{\cA,\{1\}^3}(t)
+2t^{\bp}\pounds^*_{\cA, \{1\}^3}(1,t,1)+t^{\bp}\pounds_{\cA, (2,1)}(t)+t^{\bp}\widetilde{\pounds}_{\cA, (1,2)}(t)\\
=&(2-t^{\bp})\pounds_{\cA, \{1\}^3}(1-t)+(2-t^{\bp})(1-t^{\bp})\widetilde{\pounds}_{\cA,\{1\}^3}(1-t)
+2(1-t^{\bp})\pounds^*_{\cA, \{1\}^3}(1,1-t,1)\\
&+(1-t^{\bp})\pounds_{\cA, (1,2)}(1-t)+(1-t^{\bp})\widetilde{\pounds}_{\cA, (2,1)}(1-t).
\end{align*}
\end{corollary}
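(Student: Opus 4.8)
The plan is to derive this identity from the OY-type functional equation $\pounds^\OY_{\cA, \{1\}^3}(t)=\pounds^\OY_{\cA, \{1\}^3}(1-t)$ established in Example \ref{eg}(ii), by translating both sides into SS-type functions through Proposition \ref{OYtoSS}. First I would expand $\pounds^\OY_{\cA, \{1\}^3}(t)$ by applying Proposition \ref{OYtoSS} with $\bk=(1,1,1)$ and $r=3$; the right-hand side of the corollary will then arise from the same expansion applied to $\pounds^\OY_{\cA, \{1\}^3}(1-t)$, after simplification. Equating the two and invoking the functional equation yields the claim.

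The computational core is the enumeration of $\Phi_3=\Phi_{3,2}\sqcup\Phi_{3,3}$. I would first list the admissible surjections: $\Phi_{3,2}=\{(1,2,1),(2,1,2)\}$, the only maps $[3]\to[2]$ with no equal consecutive values, and $\Phi_{3,3}$ consisting of the six permutations of $[3]$. For each $\phi$ I would record $s_\phi$, the index $(\sum_{\phi(j)=1}k_j,\ldots,\sum_{\phi(j)=s_\phi}k_j)$ (which, since $\bk=\{1\}^3$, is just the sequence of fiber cardinalities of $\phi$), and the slot $\phi(3)$ that locates the variable $t$ inside $(\{1\}^{\phi(r)-1},t,\{1\}^{s_\phi-\phi(r)})$. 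Sorting the eight maps by $i=\beta(\phi)=\delta_\phi(3)+1$, i.e.\ by their number of descents plus one, produces $\Phi^1_3=\{(1,2,3)\}$, $\Phi^3_3=\{(3,2,1)\}$, and the remaining six maps in $\Phi^2_3$. Reading off the summands and attaching the prefactor $t^{(i-1)\bp}$, the maps with $i=1,3$ give $\pounds_{\cA,\{1\}^3}(t)$ and $t^{2\bp}\widetilde{\pounds}_{\cA,\{1\}^3}(t)$, while the six maps in $\Phi^2_3$ give $t^{\bp}$ times $\pounds_{\cA,\{1\}^3}(t)+\widetilde{\pounds}_{\cA,\{1\}^3}(t)+2\pounds^*_{\cA,\{1\}^3}(1,t,1)+\pounds_{\cA,(1,2)}(t)+\widetilde{\pounds}_{\cA,(2,1)}(t)$. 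Collecting powers of $t^{\bp}$ assembles the left-hand side.

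For the right-hand side I would run the identical expansion with $t$ replaced by $1-t$ and then apply the Frobenius identity $(1-t)^{\bp}=1-t^{\bp}$, already exploited in Example \ref{eg}(ii), together with $(1-t)^{2\bp}=(1-t^{\bp})^2$. This converts the prefactors $1+(1-t)^{\bp}$ and $(1-t)^{\bp}\bigl(1+(1-t)^{\bp}\bigr)$ into $2-t^{\bp}$ and $(1-t^{\bp})(2-t^{\bp})$, reproducing the displayed right-hand side. Equating the two expansions finishes the proof.

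The step I expect to be the main obstacle is the bookkeeping in the middle paragraph rather than any conceptual difficulty: one must keep $\pounds_{\cA,\bk}$ and $\widetilde{\pounds}_{\cA,\bk}$ apart (the position of $t$ is the last versus the first slot, governed by $\phi(3)$), correctly read the depth-two data off the two non-bijective maps $(1,2,1)$ and $(2,1,2)$, and count the descents $\delta_\phi(3)$ accurately so that each summand is multiplied by the right power among $t^{0},t^{\bp},t^{2\bp}$. Once the table of all eight surjections is compiled correctly, the remaining manipulations are purely formal.
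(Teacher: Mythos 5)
Your proposal is correct and is exactly the paper's (implicit) argument: the corollary is obtained by applying Proposition \ref{OYtoSS} with $\bk=\{1\}^3$ to both sides of the functional equation $\pounds^\OY_{\cA, \{1\}^3}(t)=\pounds^\OY_{\cA, \{1\}^3}(1-t)$ from Example \ref{eg}, and your enumeration of the eight maps in $\Phi_3$, their fiber indices, the slot $\phi(3)$, and the descent counts $\delta_\phi(3)$ is accurate. One remark: your expansion produces the depth-two terms $t^{\bp}\pounds_{\cA,(1,2)}(t)+t^{\bp}\widetilde{\pounds}_{\cA,(2,1)}(t)$, which is consistent with the displayed right-hand side under $t\mapsto 1-t$ (and can be checked directly from the definition of $\pounds^\OY$), whereas the printed left-hand side shows $t^{\bp}\pounds_{\cA, (2,1)}(t)+t^{\bp}\widetilde{\pounds}_{\cA, (1,2)}(t)$; this appears to be a typo in the statement rather than an error in your derivation, so do not ``correct'' your bookkeeping to match it.
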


\begin{corollary}
We have
\begin{align*}
  &(1+4t^{\bp}+t^{2\bp})\pounds_{\cA, \{1\}^4}(t)+2t^{\bp}(2+t^{\bp})\pounds^*_{\cA, \{1\}^4}(1,1,t,1)+2t^{\bp}(1+2t^{\bp})\pounds^*_{\cA, \{1\}^4}(1,t,1,1)\\
  &+t^{\bp}(1+4t^{\bp}+t^{2\bp})\widetilde{\pounds}_{\cA, \{1\}^4}(t)+t^{\bp}(1+3t^{\bp})\widetilde{\pounds}_{\cA, (2,1,1)}(t)+2t^{\bp}(1+t^{\bp})\pounds^*_{\cA,(1,2,1)}(1,t,1)\\
  &+t^{\bp}(3+t^{\bp})\pounds_{\cA,(1,1,2)}(t)+t^{\bp}\biggl(\pounds_{\cA,(2,1,1)}(t)+\pounds^*_{\cA,(2,1,1)}(1,t,1)+\pounds_{\cA,(1,2,1)}(t)+\pounds_{\cA,(2,2)}(t)\biggr)\\
  &+t^{2\bp}\biggl(\widetilde{\pounds}_{\cA,(1,1,2)}(t)+\pounds^*_{\cA,(1,1,2)}(1,t,1)+\widetilde{\pounds}_{\cA, (1,2,1)}(t)+\widetilde{\pounds}_{\cA,(2,2)}(t)\biggr)\\
= &(6-6t^{\bp}+t^{2\bp})\pounds_{\cA, \{1\}^4}(1-t)+2(1-t^{\bp})(3-t^{\bp})\pounds^*_{\cA, \{1\}^4}(1,1,1-t,1)\\
&+2(1-t^{\bp})(3-2t^{\bp})\pounds^*_{\cA, \{1\}^4}(1,1-t,1,1)+(1-t^{\bp})(6-6t^{\bp}+t^{2\bp})\widetilde{\pounds}_{\cA, \{1\}^4}(1-t)\\
&+(1-t^{\bp})(4-3t^{\bp})\widetilde{\pounds}_{\cA, (2,1,1)}(1-t)+2(1-t^{\bp})(2-t^{\bp})\pounds^*_{\cA,(1,2,1)}(1,1-t,1)\\
  &+(1-t^{\bp})(4-t^{\bp})\pounds_{\cA,(1,1,2)}(1-t)+(1-t^{\bp})\biggl(\pounds_{\cA,(2,1,1)}(1-t)+\pounds^*_{\cA,(2,1,1)}(1,1-t,1)\\
  &+\pounds_{\cA,(1,2,1)}(1-t)+\pounds_{\cA,(2,2)}(1-t)\biggr)+(1-t^{\bp})^2\biggl(\widetilde{\pounds}_{\cA,(1,1,2)}(1-t)+\pounds^*_{\cA,(1,1,2)}(1,1-t,1)\\
  &+\widetilde{\pounds}_{\cA, (1,2,1)}(1-t)+\widetilde{\pounds}_{\cA,(2,2)}(1-t)\biggr).
\end{align*}
\end{corollary}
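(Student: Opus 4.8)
The plan is to expand both sides of the OY-type functional equation $\pounds^\OY_{\cA, \{1\}^4}(t)=\pounds^\OY_{\cA, \{1\}^4}(1-t)$ established in Example~\ref{eg}(iii) (as a consequence of Theorem~\ref{main theorem} and Corollary~\ref{1-t}) by means of Proposition~\ref{OYtoSS} applied to the index $\bk=\{1\}^4$, so that $r=4$ and every $k_j=1$, and then to equate the two resulting expressions written in terms of FMPs of SS-type. This is the same strategy used for the preceding corollary.

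First I would organize the combinatorial data. Since the condition $\phi(a)\neq\phi(a+1)$ forces $\Phi_{4,1}=\emptyset$, only $\Phi_4=\Phi_{4,2}\sqcup\Phi_{4,3}\sqcup\Phi_{4,4}$ occurs. Because all $k_j=1$, the index attached to $\phi\in\Phi_{4,s}$ in Proposition~\ref{OYtoSS} is just the tuple of fibre sizes $(\#\phi^{-1}(1),\dots,\#\phi^{-1}(s))$; the single variable occupies slot $\phi(4)$, so the corresponding $\pounds^*$ specializes to $\widetilde{\pounds}$ when $\phi(4)=1$, to $\pounds$ when $\phi(4)=s_\phi$, and to a genuine interior insertion (with $t$ in an intermediate slot) otherwise; and the power of $t^{\bp}$ is governed by $i=\beta(\phi)=\delta_\phi(4)+1$, the number of descents of the word $\phi(1)\phi(2)\phi(3)\phi(4)$. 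Here $\Phi_{4,4}=S_4$ (all $24$ permutations) produces the $\{1\}^4$-indexed terms, $\Phi_{4,3}$ (the $18$ surjections onto $[3]$ with no equal consecutive values) produces the terms indexed by the permutations of $(2,1,1)$, and $\Phi_{4,2}$ (the two alternating words $1212$ and $2121$) produces the $(2,2)$-indexed terms.

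Next I would assemble the left-hand side by summing $t^{(\beta(\phi)-1)\bp}$ over all $\phi$ that yield a fixed SS-type FMP; each such FMP is then collected with a coefficient $P(t^{\bp})$, where $P(x)=\sum_\phi x^{\delta_\phi(4)}$ is the descent-generating polynomial of the relevant set of maps. For example, the six $\sigma\in S_4$ with $\sigma(4)=4$ have descent numbers $0,1,1,1,1,2$, producing the coefficient $1+4t^{\bp}+t^{2\bp}$ of $\pounds_{\cA,\{1\}^4}(t)$. The right-hand side arises from the identical enumeration carried out at argument $1-t$: every SS-type FMP is now evaluated at $1-t$, while each power factor $t^{m\bp}$ becomes $(1-t)^{m\bp}=(1-t^{\bp})^m$, using the characteristic-$p$ identity $(1-t)^p=1-t^p$ in $(\bZ/p\bZ)[t]$. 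Equivalently, each coefficient polynomial $P(t^{\bp})$ on the left is replaced by $P(1-t^{\bp})$ on the right (e.g. $1+4t^{\bp}+t^{2\bp}$ becomes $6-6t^{\bp}+t^{2\bp}$). Equating the two collected expressions gives the stated identity.

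The main obstacle is the bookkeeping of the enumeration rather than any conceptual difficulty: one must correctly list all $44$ maps in $\Phi_4$, track the value $\phi(4)$ and the descent count of each of the $24$ permutations in $\Phi_{4,4}$ and the $18$ surjections in $\Phi_{4,3}$, and check that the resulting descent-generating polynomials $P(x)$ reproduce exactly the coefficient polynomials displayed in the statement (with $x\mapsto 1-x$ giving the right-hand coefficients). Some care is also needed to recognize which specializations of $\pounds^*$ coincide with $\pounds$ or $\widetilde{\pounds}$, so that terms carrying the same SS-type FMP are merged correctly.
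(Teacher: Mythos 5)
Your proposal is correct and follows exactly the route the paper intends: apply Proposition \ref{OYtoSS} with $\bk=\{1\}^4$ to both sides of the identity $\pounds^\OY_{\cA,\{1\}^4}(t)=\pounds^\OY_{\cA,\{1\}^4}(1-t)$ from Example \ref{eg}(iii), use $(1-t)^{p}=1-t^{p}$ to convert the powers $t^{(i-1)\bp}$ on the $1-t$ side, and collect terms by descent count over the $44$ maps in $\Phi_4$. Your enumeration (e.g.\ the descent polynomial $1+4x+x^2$ for the six permutations with $\sigma(4)=4$, becoming $6-6t^{\bp}+t^{2\bp}$ under $x\mapsto 1-t^{\bp}$) checks out against the stated coefficients.
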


\section*{Acknowledgement}

The author expresses his sincere gratitude to Dr. Shin-ichiro Seki for introducing me a question concerning the functional equations of FMPs of OY-type. He would like to thank Dr. Kenji Sakugawa and Dr. Shin-ichiro Seki for their valuable comments and helpful discussions at Keio University. He also would like to thank Prof. Kenichi Bannai, Prof. Shuji Yamamoto, Dr. Kenji Sakugawa and Dr. Shin-ichiro Seki for careful reading of the manuscript.

\end{document}